\numberwithin{equation}{subsection}
\newcommand{\sqsp}{\renewcommand{\baselinestretch}{1.1}\tiny\normalsize}
\newtheorem{theorem}[subsection]{Theorem}
\newtheorem{lemma}[subsection]{Lemma}
\newtheorem{corollary}[subsection]{Corollary}
\theoremstyle{definition}
\newtheorem{definition}[subsection]{Definition}
\newtheorem{example}[subsection]{Example}
\newcommand{\cata}{\mathcal{A}}
\newcommand{\bk}{\mathbf{k}}
\newcommand{\bZ}{\mathbf{Z}}
\newcommand{\rhotilde}{\widetilde{\rho}}
\begin{document}

\title{Module Hom-algebras}
\author{Donald Yau}

\begin{abstract}
We study a twisted version of module algebras called module Hom-algebras.  It is shown that module algebras deform into module Hom-algebras via endomorphisms.  As an example, we construct certain $q$-deformations of the usual $sl(2)$-action on the affine plane.
\end{abstract}

\keywords{Module Hom-algebra, module algebra.}

\subjclass[2000]{16W30, 16S30}

\address{Department of Mathematics\\
    The Ohio State University at Newark\\
    1179 University Drive\\
    Newark, OH 43055, USA}
\email{dyau@math.ohio-state.edu}

\date{\today}
\maketitle

\sqsp

\section{Introduction}


Let $H$ be a bialgebra, and let $A$ be an algebra.  An $H$-\emph{module algebra} structure on $A$ consists of an $H$-module structure such that the multiplication map on $A$ becomes an $H$-module morphism.  In other words, one has the \emph{module algebra axiom}
\begin{equation}
\label{eq:modulealg}
x(ab) = \sum_{(x)} \,(x'a)(x''b)
\end{equation}
for $x \in H$ and $a,b \in A$, where $\Delta(x) = \sum_{(x)} x' \otimes x''$ is the Sweedler's notation \cite{sweedler} for comultiplication.  Module algebras arise often in algebraic topology, quantum groups \cite[Chapter V.6]{kassel}, Lie and Hopf algebras theory \cite{mont,sweedler}, and group representations \cite[Chapter 3]{abe}.  For example, the singular mod $p$ cohomology $\mathrm{H}^*(X; \bZ/p)$ of a topological space $X$ is an $\cata_p$-module algebra, where $\cata_p$ is the Steenrod algebra associated to the prime $p$ \cite{es}.  Likewise, the complex cobordism $\mathrm{MU}^*(X)$ of a topological space $X$ is an $S$-module algebra, where $S$ is the Landweber-Novikov algebra \cite{landweber,novikov} of stable cobordism operations.

The purpose of this paper is to study a Hom-algebra analogue of module algebras, in which \eqref{eq:modulealg} is twisted by a linear map.  Hom-algebras were first defined for Lie algebras.  A \emph{Hom-Lie algebra} is a vector space $L$ together with a bilinear skew-symmetric bracket $[-,-] \colon L \otimes L \to L$ and a linear map $\alpha \colon L \to L$ such that $\alpha[x,y] = [\alpha(x),\alpha(y)]$ for $x,y \in L$ (multiplicativity) and that the following \emph{Hom-Jacobi identity} holds:
\[
[[x,y],\alpha(z)] + [[z,x],\alpha(y)] + [[y,z],\alpha(x)]] = 0.
\]
Hom-Lie algebras were introduced in \cite{hls} (without multiplicativity) to describe the structures on certain $q$-deformations of the Witt and the Virasoro algebras.  Earlier precursors of Hom-Lie algebras can be found in \cite{hu,liu}.  A Lie algebra is a Hom-Lie algebra with $\alpha = Id$.  More generally, if $L$ is a Lie algebra and $\alpha$ is a Lie algebra endomorphism on $L$, then $L$ becomes a Hom-Lie algebra with the bracket $[x,y]_\alpha = \alpha([x,y])$ \cite{yau2}.

Likewise, a \emph{Hom-associative algebra} $A$ \cite{ms,ms3} has a bilinear map $\mu \colon A \otimes A \to A$ and a linear map $\alpha \colon A \to A$ such that $\alpha(\mu(x,y)) = \mu(\alpha(x),\alpha(y))$ and
\begin{equation}
\label{eq:homass}
\mu(\alpha(x),\mu(y,z)) = \mu(\mu(x,y),\alpha(z))
\end{equation}
for $x,y,z \in A$.  It is shown in \cite{ms} that a Hom-associative algebra $(A,\mu,\alpha)$ gives rise to a Hom-Lie algebra $(A,[-,-],\alpha)$ in which $[x,y] = \mu(x,y) - \mu(y,x)$, i.e., the commutator bracket of $\mu$.  Conversely, given a Hom-Lie algebra $L$, there is a universal enveloping Hom-associative algebra $U(L)$ \cite{yau,yau3}.  Dualizing \eqref{eq:homass}, one can define Hom-coassociative coalgebra and Hom-bialgebra \cite{ms2,ms4,yau3}.  It is shown in \cite{yau3} that the universal enveloping Hom-associative algebra $U(L)$ is a Hom-bialgebra.  Given an algebra $A$ and an algebra endomorphism $\alpha$, one obtains a Hom-associative algebra structure on $A$ with multiplication $\mu_\alpha = \alpha \circ \mu$ \cite{yau2}.  The same procedure can be applied to coalgebras, bialgebras, and other kinds of algebraic structures, as was done in \cite{atm,ms4,yau3}, to obtain Hom-coassociative coalgebras, Hom-bialgebras, and other Hom-algebra structures.


Using \eqref{eq:homass} as a model, one can define modules and their morphisms over a Hom-associative algebra.  The precise definitions will be given in Section ~\ref{sec:modalg}.  Let $H = (H,\mu_H,\Delta_H,\alpha_H)$ be a Hom-bialgebra and $A = (A,\mu_A,\alpha_A)$ be a Hom-associative algebra.  Then an \emph{$H$-module Hom-algebra} structure on $A$ consists of an $H$-module structure $\rho \colon H \otimes A \to A$ on $A$, $\rho(x \otimes a) = xa$, such that the \emph{module Hom-algebra axiom}
\begin{equation}
\label{eq:modulehomalg}
\alpha_H^2(x)(ab) = \sum_{(x)} \, (x'a)(x''b)
\end{equation}
holds.  Of course, if $\alpha_H = Id_H$ and $\alpha_A = Id_A$, then an $H$-module Hom-algebra is exactly an $H$-module algebra.

As in the case of module algebras, the module Hom-algebra axiom \eqref{eq:modulehomalg} can be interpreted as a certain multiplication map being an $H$-module morphism.  The following characterization of module Hom-algebras will be proved in Section ~\ref{sec:modalg}.  The symbol $\tau_{H,A}$ denotes the twist isomorphism $H \otimes A \to A \otimes H$ where $\tau_{H,A}(x \otimes a) = a \otimes x$.

\begin{theorem}
\label{thm:char}
Let $H = (H,\mu_H,\Delta_H,\alpha_H)$ be a Hom-bialgebra, $A = (A,\mu_A,\alpha_A)$ be a Hom-associative algebra, and $\rho \colon H \otimes A \to A$ be an $H$-module structure on $A$.  Then the following statements hold.
\begin{enumerate}
\item
The map
\begin{equation}
\label{eq:rhotilde}
\rhotilde = \rho \circ (\alpha_H^2 \otimes Id_A) \colon H \otimes A \to A
\end{equation}
gives $A$ another $H$-module structure.
\item
The map
\begin{equation}
\label{eq:rho2}
\rho^2 = \rho^{\otimes 2} \circ (Id_H \otimes \tau_{H,A} \otimes Id_A) \circ (\Delta_H \otimes Id_A^{\otimes 2}) \colon H \otimes A^{\otimes 2} \to A^{\otimes 2}
\end{equation}
gives $A^{\otimes 2}$ an $H$-module structure.
\item
The map $\rho$ gives $A$ the structure of an $H$-module Hom-algebra if and only if $\mu_A \colon A^{\otimes 2} \to A$ is a morphism of $H$-modules, where in $\mu_A$ we equip $A^{\otimes 2}$ and $A$ with the $H$-module structures \eqref{eq:rho2} and \eqref{eq:rhotilde}, respectively.
\end{enumerate}
\end{theorem}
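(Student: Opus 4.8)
The plan is to handle the three parts in sequence: parts (1) and (2) establish that $\rhotilde$ and $\rho^2$ are genuine Hom-module structures---with twisting maps $\alpha_A$ on $A$ and $\alpha_A \otimes \alpha_A$ on $A^{\otimes 2}$---while part (3) is then a direct comparison of two maps. Throughout I would work in Sweedler's notation $\Delta_H(x) = \sum_{(x)} x' \otimes x''$, write $\rho(x \otimes a) = xa$, and unravel the composites as $\rhotilde(x \otimes a) = \alpha_H^2(x)a$ and $\rho^2(x \otimes a \otimes b) = \sum_{(x)} (x'a) \otimes (x''b)$. The structural facts I expect to use are: the multiplicativity of $\alpha_H$; the compatibility $\Delta_H \circ \alpha_H = (\alpha_H \otimes \alpha_H) \circ \Delta_H$; the Hom-bialgebra condition that $\Delta_H$ is an algebra morphism, i.e. $\Delta_H(\mu_H(x \otimes y)) = \sum_{(x),(y)} (x'y') \otimes (x''y'')$; the module axiom for $\rho$; and the compatibility $\alpha_A(xa) = \alpha_H(x)\alpha_A(a)$.

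For part (1), I would expand both sides of the module axiom for $\rhotilde$: acting by a product $\mu_H(x \otimes y)$ gives, via multiplicativity of $\alpha_H$, a term built from $\mu_H(\alpha_H^2(x) \otimes \alpha_H^2(y))$, whereas iterating $\rhotilde$ gives a term built from $\alpha_H^3(x)$ acting on $\alpha_H^2(y)a$; these coincide by the module axiom for $\rho$ applied to the arguments $\alpha_H^2(x), \alpha_H^2(y)$, and the compatibility of $\rhotilde$ with $\alpha_A$ follows the same way. For part (2), I would similarly expand the module axiom for $\rho^2$ on $x \otimes a \otimes b$: iterating the action and using $\Delta_H \circ \alpha_H = (\alpha_H \otimes \alpha_H) \circ \Delta_H$ produces $\sum [\alpha_H(x')(y'a)] \otimes [\alpha_H(x'')(y''b)]$, while acting by the product first and expanding $\Delta_H(\mu_H(x \otimes y))$ via the bialgebra compatibility produces a sum whose two tensor factors match the previous ones by the module axiom for $\rho$. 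The compatibility of $\rho^2$ with $\alpha_A \otimes \alpha_A$ is verified the same way. Note this uses only that $\Delta_H$ is multiplicative and $\alpha_H$-compatible, not Hom-coassociativity, since $\rho^2$ involves a single comultiplication.

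For part (3), I would compute the two composites in the morphism condition $\mu_A \circ \rho^2 = \rhotilde \circ (Id_H \otimes \mu_A)$ directly. On $x \otimes a \otimes b$ the left side is $\mu_A\bigl(\sum_{(x)} (x'a) \otimes (x''b)\bigr) = \sum_{(x)} (x'a)(x''b)$ and the right side is $\rhotilde(x \otimes ab) = \alpha_H^2(x)(ab)$. Hence $\mu_A$ is a morphism of $H$-modules exactly when $\alpha_H^2(x)(ab) = \sum_{(x)} (x'a)(x''b)$ for all $x, a, b$, which is the module Hom-algebra axiom \eqref{eq:modulehomalg}. The remaining half of the morphism condition, that $\mu_A$ commute with the twisting maps $\alpha_A \otimes \alpha_A$ and $\alpha_A$, is simply $\mu_A \circ (\alpha_A \otimes \alpha_A) = \alpha_A \circ \mu_A$, i.e. the multiplicativity of $\alpha_A$, and so holds automatically.

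The one place where several structure maps interact at once---and thus the main thing to get right---is part (2): the $\alpha_H$ produced by $\Delta_H \circ \alpha_H$ on one side must line up with the products $x'y'$ produced by multiplicativity of $\Delta_H$ on the other, so that the module axiom for $\rho$ can be applied factorwise. Parts (1) and (3) are then essentially bookkeeping. It is worth recording in the writeup that the exponent $2$ in $\rhotilde = \rho \circ (\alpha_H^2 \otimes Id_A)$ is calibrated precisely so that $\rhotilde$ is a bona fide module structure whose morphism condition for $\mu_A$ reproduces \eqref{eq:modulehomalg}.
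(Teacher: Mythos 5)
Your proposal is correct and takes essentially the same approach as the paper: parts (1) and (2) are verified as Hom-module structure checks using the module axiom for $\rho$, the compatibility $\alpha_A \circ \rho = \rho \circ (\alpha_H \otimes \alpha_A)$, multiplicativity of $\alpha_H$, and the identities $\Delta_H \circ \alpha_H = \alpha_H^{\otimes 2} \circ \Delta_H$ and $\Delta_H \circ \mu_H = \mu_H^{\otimes 2} \circ (Id \otimes \tau \otimes Id) \circ \Delta_H^{\otimes 2}$, while part (3) reduces to comparing $\mu_A \circ \rho^2$ with $\rhotilde \circ (Id_H \otimes \mu_A)$, the Hom-module half of the morphism condition being exactly multiplicativity of $\alpha_A$. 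The only difference is presentational---you compute element-wise in Sweedler notation where the paper writes the same identities as coordinate-free compositions of maps---and your observation that Hom-coassociativity is never needed is consistent with the paper's own proof.
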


As we mentioned above, algebras, coalgebras, and bialgebras deform into the respective types of Hom-algebras via an endomorphism.  The following result, which will be proved in Section ~\ref{sec:deform}, shows that module algebras deform into module Hom-algebras via an endomorphism.  This result provides a large class of examples of module Hom-algebras.

\begin{theorem}
\label{thm:deform}
Let $H = (H,\mu_H,\Delta_H)$ be a bialgebra and $A = (A,\mu_A)$ be an $H$-module algebra via $\rho \colon H \otimes A \to A$.  Let $\alpha_H \colon H \to H$ be a bialgebra endomorphism and $\alpha_A \colon A \to A$ be an algebra endomorphism such that 
\begin{equation}
\label{eq:alpharho}
\alpha_A \circ \rho = \rho \circ (\alpha_H \otimes \alpha_A).  
\end{equation}
Write $H_\alpha$ for the Hom-bialgebra $(H,\mu_{\alpha,H} = \alpha_H \circ \mu_H,\Delta_{\alpha,H} = \Delta_H \circ \alpha_H,\alpha_H)$ and $A_\alpha$ for the Hom-associative algebra $(A,\mu_{\alpha,A} = \alpha_A \circ \mu_A,\alpha_A)$.  Then the map
\begin{equation}
\label{eq:rhoalpha}
\rho_\alpha = \alpha_A \circ \rho \colon H \otimes A \to A
\end{equation}
gives the Hom-associative algebra $A_\alpha$ the structure of an $H_\alpha$-module Hom-algebra.
\end{theorem}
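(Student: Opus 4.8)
The plan is to verify the two defining requirements of a module Hom-algebra directly: first that $\rho_\alpha = \alpha_A \circ \rho$ really is an $H_\alpha$-module structure on $A_\alpha$, and then that the module Hom-algebra axiom \eqref{eq:modulehomalg} holds for this deformed data. Throughout I would lean on exactly three ingredients. The first is the compatibility \eqref{eq:alpharho}, i.e. $\rho \circ (\alpha_H \otimes \alpha_A) = \alpha_A \circ \rho$, which lets me slide a twist from the module factor past $\rho$ at the cost of turning it into a leading $\alpha_A$. The second is that $\alpha_H$ is a coalgebra morphism, so $\Delta_H \circ \alpha_H = (\alpha_H \otimes \alpha_H) \circ \Delta_H$ and hence the Sweedler decomposition in $H_\alpha$ reads $\Delta_{\alpha,H}(x) = \sum_{(x)} \alpha_H(x') \otimes \alpha_H(x'')$. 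The third is that $\alpha_A$ is an algebra endomorphism, so it commutes with $\mu_A$ and in particular $\alpha_A^2(ab) = \alpha_A^2(a)\,\alpha_A^2(b)$. I write $\cdot$ for the original action $\rho$ and juxtaposition for $\mu_A$, and note that $x \cdot_\alpha a = \alpha_A(x\cdot a)$ and $\mu_{\alpha,A}(a,b) = \alpha_A(ab)$.

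For the module structure I would check the two Hom-module axioms modeled on \eqref{eq:homass}. Multiplicativity, $\alpha_A \circ \rho_\alpha = \rho_\alpha \circ (\alpha_H \otimes \alpha_A)$, is immediate from \eqref{eq:alpharho}, since both sides collapse to $\alpha_A^2 \circ \rho$. For the Hom-associativity of the action, $\rho_\alpha \circ (\mu_{\alpha,H} \otimes \alpha_A) = \rho_\alpha \circ (\alpha_H \otimes \rho_\alpha)$, I would evaluate on $x \otimes y \otimes a$: expanding $\mu_{\alpha,H} = \alpha_H \circ \mu_H$, using that $\alpha_H$ is an algebra map and that $A$ is an \emph{ordinary} $H$-module (so $(uv)\cdot c = u\cdot(v\cdot c)$), and feeding the outcome repeatedly through \eqref{eq:alpharho}, both sides reduce to $\alpha_A^2\big(x \cdot (y \cdot a)\big)$, so they agree.

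For the module Hom-algebra axiom I would expand both sides of \eqref{eq:modulehomalg} in the deformed data. On the left, $\alpha_H^2(x) \cdot_\alpha (a \cdot_\alpha b) = \alpha_A\big(\rho(\alpha_H^2(x) \otimes \alpha_A(ab))\big)$; one application of \eqref{eq:alpharho} converts this to $\alpha_A^2\big(\alpha_H(x)\cdot(ab)\big)$. The original module algebra axiom \eqref{eq:modulealg} applied to $\alpha_H(x)\cdot(ab)$, together with $\Delta_H(\alpha_H(x)) = \sum_{(x)}\alpha_H(x')\otimes\alpha_H(x'')$, rewrites this as $\sum_{(x)} \alpha_A^2\big((\alpha_H(x')\cdot a)(\alpha_H(x'')\cdot b)\big)$, which I distribute over $\mu_A$ because $\alpha_A^2$ is an algebra map. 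On the right, the Sweedler components of $x$ in $H_\alpha$ are $\alpha_H(x') \otimes \alpha_H(x'')$, each factor is $\alpha_H(x') \cdot_\alpha a = \alpha_A(\alpha_H(x') \cdot a)$, and $\mu_{\alpha,A}$ contributes one further $\alpha_A$; using again that $\alpha_A$ is an algebra map, the right-hand side also reduces to $\sum_{(x)} \alpha_A^2\big((\alpha_H(x')\cdot a)(\alpha_H(x'')\cdot b)\big)$. The two sides coincide, which establishes \eqref{eq:modulehomalg}.

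The calculations are routine once the bookkeeping is fixed; the one spot that genuinely demands care, and the main obstacle, is the left-hand side, where the single twisted element $\alpha_H^2(x)$ must be reconciled with the two \emph{separate} twists $\alpha_H(x')$ and $\alpha_H(x'')$ that appear on the right. This is precisely what the extra power $\alpha_H^2$ in \eqref{eq:modulehomalg} is built to absorb: one copy of $\alpha_H$ is consumed when \eqref{eq:alpharho} moves a twist past $\rho$, while the remaining copy is distributed through $\Delta_H$ by the coalgebra-morphism identity. The essential consistency check is that exactly two copies of $\alpha_A$ accumulate on each side. (Alternatively, one could deduce \eqref{eq:modulehomalg} from Theorem~\ref{thm:char} by identifying the deformed structures $\rhotilde$ and $\rho^2$, but the direct verification above is self-contained.)
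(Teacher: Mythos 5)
Your proposal is correct and takes essentially the same route as the paper: reduce both sides of the module Hom-algebra axiom to the original data using \eqref{eq:alpharho}, the coalgebra-morphism identity $\Delta_H \circ \alpha_H = \alpha_H^{\otimes 2} \circ \Delta_H$, and the fact that $\alpha_A$ is an algebra map, then invoke the original module algebra axiom \eqref{eq:modulealg}. The differences are only cosmetic: you compute element-wise in Sweedler notation where the paper argues point-free with compositions of maps, and you write out the verification that $\rho_\alpha$ is an $H_\alpha$-module structure, which the paper leaves to the reader as an exercise.
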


We now describe some consequences of Theorem ~\ref{thm:deform}.  In the context of the above Theorem, if $\alpha_H = Id_H$, then we have the condition $\alpha_A \circ \rho = \rho \circ (Id_H \otimes \alpha_A)$, which means exactly that $\alpha_A$ is $H$-linear.  Thus, using the same notations as above, we have the following special case.

\begin{corollary}
\label{cor:deform}
Let $H = (H,\mu_H,\Delta_H)$ be a bialgebra, $A = (A,\mu_A)$ be an $H$-module algebra via $\rho \colon H \otimes A \to A$, and $\alpha_A \colon A \to A$ be an algebra endomorphism that is also $H$-linear.  Then the map $\rho_\alpha$ \eqref{eq:rhoalpha} gives the Hom-associative algebra $A_\alpha$ the structure of an $H$-module Hom-algebra, where $H$ denotes the Hom-bialgebra $(H,\mu_H,\Delta_H,Id_H)$.
\end{corollary}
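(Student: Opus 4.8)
The plan is to obtain this corollary as the immediate specialization of Theorem~\ref{thm:deform} to the case $\alpha_H = Id_H$. To do so I must verify that the three hypotheses of that theorem are met in the present setting, and that the Hom-bialgebra it produces coincides with the one named $H$ in the corollary.

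First I would note that $Id_H$ is trivially a bialgebra endomorphism of $H$, since it commutes with $\mu_H$ and $\Delta_H$ and preserves the unit and counit. Next I would specialize the compatibility condition \eqref{eq:alpharho}: with $\alpha_H = Id_H$ it reads
\[
\alpha_A \circ \rho = \rho \circ (Id_H \otimes \alpha_A),
\]
which is exactly the assertion that $\alpha_A$ is $H$-linear. As $H$-linearity of $\alpha_A$ is part of the hypothesis of the corollary, condition \eqref{eq:alpharho} holds. (This is precisely the remark already recorded in the paragraph preceding the corollary.) Finally, since $\mu_{\alpha,H} = Id_H \circ \mu_H = \mu_H$ and $\Delta_{\alpha,H} = \Delta_H \circ Id_H = \Delta_H$, the Hom-bialgebra $H_\alpha$ of Theorem~\ref{thm:deform} is precisely $(H,\mu_H,\Delta_H,Id_H)$, the Hom-bialgebra denoted $H$ in the corollary.

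With these checks in place, Theorem~\ref{thm:deform} applies verbatim and concludes that $\rho_\alpha = \alpha_A \circ \rho$ equips the Hom-associative algebra $A_\alpha$ with the structure of an $H_\alpha$-module Hom-algebra, as required. The argument presents no genuine obstacle beyond this bookkeeping, since the content of the corollary is entirely subsumed by Theorem~\ref{thm:deform}; the only point meriting a moment's attention is confirming that the $H$-linearity hypothesis is exactly the $\alpha_H = Id_H$ instance of \eqref{eq:alpharho}, after which the conclusion follows by direct substitution.
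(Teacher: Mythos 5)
Your proposal is correct and matches the paper's own argument exactly: the paper likewise obtains the corollary by specializing Theorem~\ref{thm:deform} to $\alpha_H = Id_H$, observing (in the paragraph preceding the corollary) that condition \eqref{eq:alpharho} then reduces precisely to $H$-linearity of $\alpha_A$. Nothing further is needed.
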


Examples that illustrate Corollary ~\ref{cor:deform} will be given in Section ~\ref{sec:deform}.

Module algebras over the universal enveloping bialgebra $U(L)$ of a Lie algebra $L$ are important in the study of Lie algebras and quantum groups.  For example, there is a $U(sl(2))$-module algebra structure on the affine plane $\bk[x,y]$ that captures all the finite dimensional simple $sl(2)$-modules \cite[Theorem V.6.4]{kassel}.  The following result, which will be proved in Section ~\ref{sec:twisted}, applies Theorem ~\ref{thm:deform} in the enveloping bialgebra context.  The symbols $\mu_U \colon U(L)^{\otimes 2} \to U(L)$ and $\Delta_U \colon U(L) \to U(L)^{\otimes 2}$ denote the multiplication and the comultiplication, respectively.

\begin{theorem}
\label{thm:U}
Let $L$ be a Lie algebra, $A = (A,\mu_A)$ be a $U(L)$-module algebra via $\rho \colon U(L) \otimes A \to A$, $\alpha_L \colon L \to L$ be a Lie algebra endomorphism, and $\alpha_A \colon A \to A$ be an algebra endomorphism.  Suppose that $\alpha_A(xa) = \alpha_L(x)\alpha_A(a)$ for $x \in L$ and $a \in A$.  Then the following statements hold.
\begin{enumerate}
\item
There exists a unique bialgebra endomorphism extension $\alpha_U \colon U(L) \to U(L)$ of $\alpha_L$ such that
\begin{equation}
\label{eq:alphaza}
\alpha_A(za) = \alpha_U(z)\alpha_A(a)
\end{equation}
for $z \in U(L)$ and $a \in A$.
\item
The map
\[
\rho_\alpha = \alpha_A \circ \rho \colon U(L) \otimes A \to A
\]
gives the Hom-associative algebra $A_\alpha = (A,\mu_{\alpha,A} = \alpha_A \circ \mu_A,\alpha_A)$ the structure of a $U(L)_\alpha$-module Hom-algebra.  Here $U(L)_\alpha$ is the Hom-bialgebra $(U(L),\mu_{\alpha,U} = \alpha_U \circ \mu_U, \Delta_{\alpha,U} = \Delta_U \circ \alpha_U, \alpha_U)$.
\end{enumerate}
\end{theorem}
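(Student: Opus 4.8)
The plan is to treat the two parts separately, placing the bulk of the work in part (1); part (2) will then reduce to a direct invocation of Theorem~\ref{thm:deform}.

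For part (1), I would first produce $\alpha_U$ from the universal property of the enveloping algebra. Since $\alpha_L$ is a Lie algebra endomorphism, the composite $L \xrightarrow{\alpha_L} L \hookrightarrow U(L)$ is a Lie algebra map into the commutator Lie algebra of $U(L)$, so it extends uniquely to an associative algebra endomorphism $\alpha_U \colon U(L) \to U(L)$ with $\alpha_U|_L = \alpha_L$ and $\alpha_U(1) = 1$. To check that $\alpha_U$ is a bialgebra endomorphism, it is enough to verify compatibility with $\Delta_U$ and with the counit on the generators $x \in L$, because $\Delta_U$, the counit, $\alpha_U \otimes \alpha_U$, and $\alpha_U$ are all algebra maps and $L$ generates $U(L)$ as an algebra. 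For $x \in L$ the element is primitive, so $\Delta_U(\alpha_U(x))$ and $(\alpha_U \otimes \alpha_U)(\Delta_U(x))$ both equal $\alpha_L(x) \otimes 1 + 1 \otimes \alpha_L(x)$, and the counit check is analogous. Uniqueness is clear, since any bialgebra extension of $\alpha_L$ is forced on $L$ and hence on all of $U(L)$ by multiplicativity.

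The key step of part (1) is the intertwining identity \eqref{eq:alphaza}, which I would establish by induction on the length of monomials $z = x_1 \cdots x_n$ with $x_i \in L$, as these (together with $1$) span $U(L)$ by the Poincar\'e--Birkhoff--Witt theorem. The cases $z = 1$ and $z \in L$ hold from $\alpha_A(1 \cdot a) = \alpha_A(a) = \alpha_U(1)\alpha_A(a)$ and from the hypothesis $\alpha_A(xa) = \alpha_L(x)\alpha_A(a) = \alpha_U(x)\alpha_A(a)$. For the inductive step I would write $z = xw$ with $x \in L$ and $w$ a shorter monomial, and chain together the associativity of the module action $(xw)a = x(wa)$, the hypothesis applied to the element $wa \in A$, the inductive hypothesis for $w$, the fact that $\alpha_U$ is an algebra map, and associativity once more:
\[
\alpha_A\bigl((xw)a\bigr) = \alpha_L(x)\,\alpha_A(wa) = \alpha_U(x)\bigl(\alpha_U(w)\alpha_A(a)\bigr) = \alpha_U(xw)\,\alpha_A(a).
\]
Linearity then extends \eqref{eq:alphaza} to all of $U(L)$.

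Part (2) is then immediate. The identity \eqref{eq:alphaza} says exactly that $\alpha_A \circ \rho = \rho \circ (\alpha_U \otimes \alpha_A)$, which is the compatibility condition \eqref{eq:alpharho} with $\alpha_H = \alpha_U$. Since $\alpha_U$ is a bialgebra endomorphism by part (1) and $\alpha_A$ is an algebra endomorphism by hypothesis, Theorem~\ref{thm:deform} applies with $H = U(L)$ and yields that $\rho_\alpha = \alpha_A \circ \rho$ makes $A_\alpha$ a $U(L)_\alpha$-module Hom-algebra, with $U(L)_\alpha = (U(L), \alpha_U \circ \mu_U, \Delta_U \circ \alpha_U, \alpha_U)$. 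The main obstacle is thus the inductive verification of \eqref{eq:alphaza}; the rest is a standard universal-property argument followed by a citation of Theorem~\ref{thm:deform}.
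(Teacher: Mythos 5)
Your proposal is correct and follows essentially the same route as the paper: construct $\alpha_U$ as the multiplicative extension of $\alpha_L$ (the paper verifies well-definedness on the defining relations, which is your universal-property argument), check comultiplication compatibility on generators, verify \eqref{eq:alphaza} on monomials, and then deduce part (2) by citing Theorem~\ref{thm:deform}. The only difference is one of completeness: the paper merely checks \eqref{eq:alphaza} for $z = xy$ and asserts that this suffices, whereas your induction on monomial length spells out the general case that this representative computation stands for.
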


Using Theorem ~\ref{thm:U}, we will construct certain $q$-deformations of the $U(sl(2))$-module algebra structure on the affine plane mentioned above.

We note that module Hom-algebras, and other kinds of Hom-algebras (e.g., Hom-(co)associative (co)algebras, Hom-Lie (co)algebras, Hom-bialgebras  \cite{ms,ms2,ms3,ms4,yau,yau2,yau3}, and $n$-ary Hom-Nambu/Lie/associative algebras \cite{atm}), are algebras over their respective (colored) PROPs \cite{markl}.  In particular, their algebraic deformations, in the sense of Gerstenhaber, are governed by some $L_\infty$-deformation complexes, as shown in \cite{fmy}.

The rest of this paper is organized as follows.  In Section ~\ref{sec:modalg}, we state the relevant definitions and prove Theorem ~\ref{thm:char}.  In Section ~\ref{sec:deform}, we prove Theorem ~\ref{thm:deform} and provide some examples to illustrate Corollary ~\ref{cor:deform}.  In Section ~\ref{sec:twisted}, we prove Theorem ~\ref{thm:U} and illustrate it with the case $L = sl(2)$ and $A = \bk[x,y]$ (Example ~\ref{ex:q}).

\section{Hom-algebra analogue of module algebra}
\label{sec:modalg}


Before we define module Hom-algebras and prove Theorem ~\ref{thm:char}, let us first recall some basic definitions regarding Hom-modules, Hom-associative algebras, and Hom-bialgebras.  The first two parts of Theorem ~\ref{thm:char} will be proved as Lemmas ~\ref{lem:rhotilde} and ~\ref{lem:rho2}.  The last part will be proved at the end of this section.

\subsection{Conventions and notations}
\label{subsec:notations}

Throughout the rest of this paper, $\bk$ denotes a field of any characteristic.  The only exception is Example ~\ref{ex:q}, where $\bk$ is assumed to have characteristic $0$.  Vector spaces, tensor products, and linearity are all meant over $\bk$, unless otherwise specified.

Given two vector spaces $V$ and $W$, denote by $\tau = \tau_{V,W} \colon V \otimes W \to W \otimes V$ the twist isomorphism, i.e., $\tau(v \otimes w) = w \otimes v$.  
For a coalgebra $C$ with comultiplication $\Delta \colon C \to C \otimes C$, we use Sweedler's notation for comultiplication: $\Delta(x) = \sum_{(x)} x'\otimes x''$ \cite{sweedler}.

\subsection{Hom-modules}

A \textbf{Hom-module} is a pair $(V, \alpha)$ \cite{yau} in which $V$ is a vector space and $\alpha \colon V \to V$ is a linear map.  A morphism $(V, \alpha_V) \to (W, \alpha_W)$ of Hom-modules is a linear map $f \colon V \to W$ such that $\alpha_W \circ f = f \circ \alpha_V$.  We will often abbreviate a Hom-module $(V,\alpha)$ to $V$.  The tensor product of the Hom-modules $(V, \alpha_V)$ and $(W, \alpha_W)$ consists of the vector space $V \otimes W$ and the linear self-map $\alpha_V \otimes \alpha_W$.

\subsection{Hom-associative algebras}
\label{subsec:homass}

A \textbf{Hom-associative algebra} \cite{ms,ms3,yau2} is a triple $(A,\mu,\alpha)$ in which $(A,\alpha)$ is a Hom-module and $\mu \colon A \otimes A \to A$ is a bilinear map such that
\begin{enumerate}
\item
$\alpha \circ \mu = \mu \circ \alpha^{\otimes 2}$ (multiplicativity) and
\item
$\mu \circ (\alpha \otimes \mu) = \mu \circ (\mu \otimes \alpha)$ (Hom-associativity).
\end{enumerate}
In what follows, we will also write $\mu(a \otimes b)$ as $ab$.

For example, an algebra $(A,\mu)$ can be regarded as a Hom-associative algebra $(A,\mu,Id_A)$.  Conversely, let $\alpha \colon A \to A$ be an algebra endomorphism of the algebra $(A,\mu)$.  Define the new multiplication
\begin{equation}
\label{eq:mualpha}
\mu_\alpha = \alpha \circ \mu \colon A^{\otimes 2} \to A.
\end{equation}
One can check that $A_\alpha = (A,\mu_\alpha,\alpha)$ is a Hom-associative algebra \cite{yau2}.  In fact, both $\mu_\alpha \circ (\alpha \otimes \mu_\alpha)$ and $\mu_\alpha \circ (\mu_\alpha \otimes \alpha)$, when applied to $a \otimes b \otimes c \in A^{\otimes 3}$, are equal to $\alpha^2(abc)$.  So $\mu_\alpha$ is Hom-associative.  Multiplicativity of $\alpha$ with respect to $\mu_\alpha$ can be checked similarly.

Suppose that $(A,\mu_A,\alpha_A)$ and $(B,\mu_B,\alpha_B)$ are Hom-associative algebras.  Their tensor product $A \otimes B$ as a Hom-associative algebra is defined in the usual way, with $\alpha_{A \otimes B} = \alpha_A \otimes \alpha_B$ and
\[
\mu_{A \otimes B} = (\mu_A \otimes \mu_B) \circ (Id_A \otimes \tau_{B,A} \otimes Id_B).
\]
A \textbf{morphism} $f \colon (A,\mu_A,\alpha_A) \to (B,\mu_B,\alpha_B)$ of Hom-associative algebras is a morphism $f \colon (A,\alpha_A) \to (B,\alpha_B)$ of the underlying Hom-modules such that $f \circ \mu_A = \mu_B \circ f^{\otimes 2}$.

\subsection{Modules over Hom-associative algebras}
\label{subsec:modules}

Let $(A,\mu_A,\alpha_A)$ be a Hom-associative algebra and $(M,\alpha_M)$ be a Hom-module.  An \textbf{$A$-module} structure on $M$ consists of a morphism $\rho \colon A \otimes M \to M$ of Hom-modules, called the \textbf{structure map}, such that
\begin{equation}
\label{eq:moduleaxiom}
\rho \circ (\alpha_A \otimes \rho) = \rho \circ (\mu_A \otimes \alpha_M).
\end{equation}
We will also write $\rho(a \otimes m)$ as $am$ for $a \in A$ and $m \in M$.  In this notation, \eqref{eq:moduleaxiom} can be rewritten as
\begin{equation}
\label{eq:moduleaxiom'}
\alpha_A(a)(bm) = (ab)\alpha_M(m)
\end{equation}
for $a,b \in A$ and $m \in M$.  If $M$ and $N$ are $A$-modules, then a \textbf{morphism} of $A$-modules $f \colon M \to N$ is a morphism of the underlying Hom-modules such that 
\begin{equation}
\label{eq:modmorphism}
f \circ \rho_M = \rho_N \circ (Id_A \otimes f), 
\end{equation}
i.e., $f(am) = af(m)$.

The following Lemma will be needed when we give an alternative characterization of a module Hom-algebra.  It proves the first part of Theorem ~\ref{thm:char}.

\begin{lemma}
\label{lem:rhotilde}
Let $(A,\mu_A,\alpha_A)$ be a Hom-associative algebra and $(M,\alpha_M)$ be an $A$-module with structure map $\rho \colon A \otimes M \to M$.  Define the map
\begin{equation}
\label{eq:rhot}
\rhotilde = \rho \circ (\alpha_A^2 \otimes Id_M) \colon A \otimes M \to M.
\end{equation}
Then $\rhotilde$ is the structure map of another $A$-module structure on $M$.
\end{lemma}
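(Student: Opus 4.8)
The plan is to verify directly the two requirements in the definition of an $A$-module for the map $\rhotilde$: first that $\rhotilde$ is a morphism of Hom-modules $(A \otimes M, \alpha_A \otimes \alpha_M) \to (M, \alpha_M)$, and second that it obeys the module axiom \eqref{eq:moduleaxiom}. Throughout I would write $\rho(a \otimes m) = am$, so that $\rhotilde(a \otimes m) = \alpha_A^2(a)m$, and freely use the multiplicativity $\alpha_A \circ \mu_A = \mu_A \circ \alpha_A^{\otimes 2}$ of $\alpha_A$ together with the fact that $\rho$ itself is a Hom-module morphism, i.e. $\alpha_M \circ \rho = \rho \circ (\alpha_A \otimes \alpha_M)$.

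For the Hom-module morphism condition I would compute $\alpha_M \circ \rhotilde$ and $\rhotilde \circ (\alpha_A \otimes \alpha_M)$ separately. Using $\alpha_M \circ \rho = \rho \circ (\alpha_A \otimes \alpha_M)$ on the first and simply composing on the second, both expressions collapse to $\rho \circ (\alpha_A^3 \otimes \alpha_M)$, since the two copies of $\alpha_A^2$ built into $\rhotilde$ commute with the single extra copy of $\alpha_A$. This shows $\rhotilde$ is a Hom-module morphism with essentially no content beyond pushing the powers of $\alpha_A$ past $\rho$.

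The substantive step is the module axiom. Working in the element notation \eqref{eq:moduleaxiom'}, I would expand $\rhotilde \circ (\alpha_A \otimes \rhotilde)$ applied to $a \otimes b \otimes m$ as $\alpha_A^3(a)\,(\alpha_A^2(b)\,m)$, and $\rhotilde \circ (\mu_A \otimes \alpha_M)$ applied to the same element as $\alpha_A^2(ab)\,\alpha_M(m)$. Multiplicativity of $\alpha_A$ rewrites the latter as $\bigl(\alpha_A^2(a)\,\alpha_A^2(b)\bigr)\,\alpha_M(m)$. The two sides are then matched by applying the original module axiom \eqref{eq:moduleaxiom'} to the twisted elements $\alpha_A^2(a)$ and $\alpha_A^2(b)$ in place of $a$ and $b$, which reads $\alpha_A\bigl(\alpha_A^2(a)\bigr)\,(\alpha_A^2(b)\,m) = \bigl(\alpha_A^2(a)\,\alpha_A^2(b)\bigr)\,\alpha_M(m)$ and yields precisely the desired identity $\alpha_A^3(a)\,(\alpha_A^2(b)\,m) = \bigl(\alpha_A^2(a)\,\alpha_A^2(b)\bigr)\,\alpha_M(m)$.

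I do not expect any genuine obstacle here; the only thing to watch is the bookkeeping of the powers of $\alpha_A$, making sure the extra $\alpha_A$ coming from the $\mu_A \otimes \alpha_M$ side of the original axiom combines correctly with the two copies of $\alpha_A$ carried by $\rhotilde$. Conceptually this is the module-theoretic shadow of the fact that twisting a structure map by a multiplicative endomorphism (here $\alpha_A^2$, which is again multiplicative) preserves the relevant compatibility, in the same spirit as the passage from $(A,\mu)$ to $A_\alpha$ recalled in Section~\ref{subsec:homass}.
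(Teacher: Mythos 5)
Your proposal is correct and follows essentially the same argument as the paper: both verify the Hom-module morphism condition by pushing powers of $\alpha_A$ past $\rho$, and both establish the module axiom by applying the original axiom \eqref{eq:moduleaxiom} to inputs twisted by $\alpha_A^2$ and invoking multiplicativity of $\alpha_A$. The only difference is cosmetic — you work in element notation via \eqref{eq:moduleaxiom'} while the paper writes the same chain of identities as compositions of maps.
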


\begin{proof}
The fact that $\rho$ is a morphism of Hom-modules means that
\begin{equation}
\label{eq:rhomorphism}
\alpha_M \circ \rho = \rho \circ (\alpha_A \otimes \alpha_M).
\end{equation}
To see that $\rhotilde$ is a morphism of Hom-modules, we compute as follows:
\[
\begin{split}
\alpha_M \circ \rhotilde
&= \alpha_M \circ \rho \circ (\alpha_A^2 \otimes Id_M)\\
&= \rho \circ (\alpha_A \otimes \alpha_M) \circ (\alpha_A^2 \otimes Id_M) \quad \text{by \eqref{eq:rhomorphism}}\\
&= \rho \circ (\alpha_A^2 \otimes Id_M) \circ (\alpha_A \otimes \alpha_M)\\
&= \rhotilde \circ (\alpha_A \otimes \alpha_M).
\end{split}
\]
To see that $\rhotilde$ satisfies \eqref{eq:moduleaxiom} (with $\rhotilde$ in place of $\rho$), we compute as follows:
\[
\begin{split}
\rhotilde \circ (\alpha_A \otimes \rhotilde)
&= \rho \circ (\alpha_A^2 \otimes Id_M) \circ (\alpha_A \otimes (\rho \circ (\alpha_A^2 \otimes Id_M)))\\
&= \rho \circ (\alpha_A \otimes \rho) \circ (\alpha_A^2 \otimes \alpha_A^2 \otimes Id_M)\\
&= \rho \circ (\mu_A \otimes \alpha_M) \circ (\alpha_A^2 \otimes \alpha_A^2 \otimes Id_M) \quad \text{by \eqref{eq:moduleaxiom}}\\
&= \rho \circ((\alpha_A^2 \circ \mu_A) \otimes \alpha_M) \quad \text{by multiplicativity of $\alpha_A$}\\
&= \rho \circ (\alpha_A^2 \otimes Id_M) \circ (\mu_A \otimes \alpha_M)\\
&= \rhotilde \circ (\mu_A \otimes \alpha_M).
\end{split}
\]
We have shown that $\rhotilde$ is the structure map of an $A$-module structure on $M$.
\end{proof}

\subsection{Hom-bialgebras}
\label{subsec:hombialgebra}

\begin{definition}
A \textbf{Hom-bialgebra} is a quadruple $(H,\mu,\Delta,\alpha)$ in which:
\begin{enumerate}
\item
$(H,\mu,\alpha)$ is a Hom-associative algebra.
\item
The comultiplication $\Delta \colon H \to H^{\otimes 2}$ is linear and is \emph{Hom-coassociative}, in the sense that
\begin{equation}
\label{eq:homcoass}
(\Delta \otimes \alpha) \circ \Delta = (\alpha \otimes \Delta) \circ \Delta.
\end{equation}
\item
$\Delta$ is a morphism of Hom-associative algebras.
\end{enumerate}
\end{definition}
Note that $\Delta$ being a morphism of Hom-associative algebras means that
\begin{equation}
\label{eq:Deltaalpha}
\Delta \circ \alpha = \alpha^{\otimes 2} \circ \Delta
\end{equation}
and
\begin{equation}
\label{eq:Deltamu}
\Delta \circ \mu = \mu^{\otimes 2} \circ (Id_H \otimes \tau \otimes Id_H) \circ \Delta^{\otimes 2}.
\end{equation}

The following Lemma will be needed when we give an alternative characterization of a module Hom-algebra.  It proves the second part of Theorem ~\ref{thm:char}.  By an $H$-module, we mean a module over the Hom-associative algebra $(H,\mu,\alpha)$.

\begin{lemma}
\label{lem:rho2}
Let $(H,\mu_H,\Delta_H,\alpha_H)$ be a Hom-bialgebra and $(M,\alpha_M)$ be an $H$-module with structure map $\rho \colon H \otimes M \to M$.  Define the map
\begin{equation}
\label{eq:rho2'}
\rho^2 = \rho^{\otimes 2} \circ (Id_H \otimes \tau_{H,M} \otimes Id_M) \circ (\Delta_H \otimes Id_M^{\otimes 2}) \colon H \otimes M^{\otimes 2} \to M^{\otimes 2}.
\end{equation}
Then $\rho^2$ is the structure map of an $H$-module structure on $M^{\otimes 2}$.
\end{lemma}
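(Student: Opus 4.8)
The plan is to verify directly that $\rho^2$ satisfies the two defining conditions of a module structure over the Hom-associative algebra $(H,\mu_H,\alpha_H)$ on the Hom-module $(M^{\otimes 2}, \alpha_M^{\otimes 2})$: namely that $\rho^2$ is a morphism of Hom-modules, and that it satisfies the module axiom \eqref{eq:moduleaxiom}. I will work throughout at the level of composites of linear maps, as in the proof of Lemma \ref{lem:rhotilde}, so that the verifications reduce to rearranging tensor factors and invoking the structural identities available for $H$ and $\rho$.

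First I would check the Hom-module morphism condition $\alpha_M^{\otimes 2} \circ \rho^2 = \rho^2 \circ (\alpha_H \otimes \alpha_M^{\otimes 2})$. Expanding $\rho^2$ from \eqref{eq:rho2'}, the left side becomes $\alpha_M^{\otimes 2} \circ \rho^{\otimes 2} \circ (Id_H \otimes \tau_{H,M} \otimes Id_M) \circ (\Delta_H \otimes Id_M^{\otimes 2})$. The key inputs here are that $\rho$ is itself a Hom-module morphism, i.e.\ $\alpha_M \circ \rho = \rho \circ (\alpha_H \otimes \alpha_M)$, applied in each tensor factor, together with the compatibility \eqref{eq:Deltaalpha} of $\Delta_H$ with $\alpha_H$, namely $\Delta_H \circ \alpha_H = \alpha_H^{\otimes 2} \circ \Delta_H$. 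After pushing the two copies of $\alpha_M$ through the two copies of $\rho$, the two resulting $\alpha_H$'s must be commuted past the twist $\tau_{H,M}$ and then reassembled with $\Delta_H$ using \eqref{eq:Deltaalpha}; this yields exactly the right-hand composite. The only care needed is bookkeeping of which factor each $\alpha_H$ and $\alpha_M$ acts on as it passes through $\tau_{H,M}$.

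Next I would verify the module axiom $\rho^2 \circ (\alpha_H \otimes \rho^2) = \rho^2 \circ (\mu_H \otimes \alpha_M^{\otimes 2})$. This is the computational heart of the lemma. Expanding both sides via \eqref{eq:rho2'} produces long composites involving two layers of $\Delta_H$ and four copies of $\rho$. The essential ingredients are the Hom-coassociativity \eqref{eq:homcoass}, the fact that $\Delta_H$ is an algebra morphism \eqref{eq:Deltamu}, and the module axiom \eqref{eq:moduleaxiom} for $\rho$ applied separately in each of the two tensor slots of $M^{\otimes 2}$. Concretely, on the left the inner $\Delta_H$ must be distributed across the two factors and then recombined with the outer $\Delta_H$ via \eqref{eq:homcoass}, so that each slot presents the pattern $\rho \circ (\alpha_H \otimes \rho)$, which \eqref{eq:moduleaxiom} converts to $\rho \circ (\mu_H \otimes \alpha_M)$; on the right, \eqref{eq:Deltamu} is what turns $\Delta_H \circ \mu_H$ into the correctly interleaved product of the two comultiplied inputs.

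The main obstacle is therefore not any single deep fact but the combinatorics of matching the two sides of \eqref{eq:moduleaxiom} once both $\rho^2$'s are fully expanded: one must correctly interleave the outputs of $\Delta_H$ with the twist $\tau_{H,M}$ so that the Hom-coassociativity identity \eqref{eq:homcoass} and the compatibility \eqref{eq:Deltamu} of $\Delta_H$ with $\mu_H$ line up the $H$-factors over the correct copies of $M$. I expect that a careful diagrammatic tracking of the three $H$-inputs (two from the iterated comultiplication, one that becomes $\mu_H$) through the twist maps is the step most prone to indexing errors, and I would organize that step as a short chain of equalities, annotating each with the identity being used, exactly in the style of the preceding lemma.
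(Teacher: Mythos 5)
Your overall strategy---verifying directly that $\rho^2$ is a morphism of Hom-modules and that it satisfies the module axiom \eqref{eq:moduleaxiom}, working at the level of composites of linear maps---is exactly the paper's, and your account of the first verification is correct: push $\alpha_M^{\otimes 2}$ through $\rho^{\otimes 2}$ using $\alpha_M \circ \rho = \rho \circ (\alpha_H \otimes \alpha_M)$, then absorb the resulting $\alpha_H^{\otimes 2}$ into $\Delta_H$ via \eqref{eq:Deltaalpha}.

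However, your plan for the module axiom misidentifies the key ingredient, and this is a genuine gap. Hom-coassociativity \eqref{eq:homcoass} is not used in this lemma and is never applicable to the composites that arise: in $\rho^2 \circ (\alpha_H \otimes \rho^2)$ the two copies of $\Delta_H$ are parallel, not iterated---the outer one acts on the first $H$-input (precomposed with $\alpha_H$), the inner one acts on the second $H$-input---so no element is ever comultiplied twice, and there is no pattern of the form $(\Delta_H \otimes \alpha_H)\circ \Delta_H$ for \eqref{eq:homcoass} to rewrite. The step you describe as ``recombining the inner $\Delta_H$ with the outer $\Delta_H$ via \eqref{eq:homcoass}'' does not exist; a computation organized around it would stall. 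What is actually needed at that point is \eqref{eq:Deltaalpha} a second time: it converts the $\Delta_H \circ \alpha_H$ coming from the outer $\rho^2$ into $\alpha_H^{\otimes 2} \circ \Delta_H$, and only after that does each $M$-slot exhibit the pattern $\rho \circ (\alpha_H \otimes \rho)$, which \eqref{eq:moduleaxiom} turns into $\rho \circ (\mu_H \otimes \alpha_M)$; finally \eqref{eq:Deltamu} reassembles the two parallel comultiplications into $\Delta_H \circ \mu_H$, matching $\rho^2 \circ (\mu_H \otimes \alpha_M^{\otimes 2})$. In Sweedler terms: the left side is $\sum (\alpha_H(x)'(y'm)) \otimes (\alpha_H(x)''(y''n))$, which \eqref{eq:Deltaalpha} rewrites as $\sum (\alpha_H(x')(y'm)) \otimes (\alpha_H(x'')(y''n))$, then \eqref{eq:moduleaxiom} gives $\sum ((x'y')\alpha_M(m)) \otimes ((x''y'')\alpha_M(n))$, and \eqref{eq:Deltamu} identifies this with the right side. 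So the correct list of ingredients is \eqref{eq:Deltaalpha}, \eqref{eq:moduleaxiom}, \eqref{eq:Deltamu}; your list substitutes \eqref{eq:homcoass} for \eqref{eq:Deltaalpha}. The repair is routine, but as written the heart of the plan rests on an identity that cannot be invoked.
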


\begin{proof}
To see that $\rho^2$ is a morphism of Hom-modules, we compute as follows:
\[
\begin{split}
\alpha_M^{\otimes 2} \circ \rho^2 
&= \alpha_M^{\otimes 2} \circ \rho^{\otimes 2} \circ (Id_H \otimes \tau_{H,M} \otimes Id_M) \circ (\Delta_H \otimes Id_M^{\otimes 2})\\
&= \rho^{\otimes 2} \circ (\alpha_H \otimes \alpha_M)^{\otimes 2} \circ (Id_H \otimes \tau_{H,M} \otimes Id_M) \circ (\Delta_H \otimes Id_M^{\otimes 2}) \quad \text{by \eqref{eq:rhomorphism}}\\
&= \rho^{\otimes 2} \circ (Id_H \otimes \tau_{H,M} \otimes Id_M) \circ (\Delta_H \otimes Id_M^{\otimes 2}) \circ (\alpha_H \otimes \alpha_M^{\otimes 2}) \quad \text{by \eqref{eq:Deltaalpha}}\\
&= \rho^2 \circ (\alpha_H \otimes \alpha_M^{\otimes 2}).
\end{split}
\]
To see that $\rho^2$ satisfies \eqref{eq:moduleaxiom} (with $\rho^2$, $H$, and $M^{\otimes 2}$ in place of $\rho$, $A$, and $M$, resp.), we compute as follows, where some obvious subscripts have been left out:
\[
\begin{split}
&\rho^2 \circ (\alpha_H \otimes \rho^2)\\
&= \rho^{\otimes 2} \circ (Id_H \otimes \tau \otimes Id_M) \circ (\Delta \otimes Id_M^{\otimes 2}) \circ \left\{\alpha_H \otimes (\rho^{\otimes 2} \circ (Id_H \otimes \tau \otimes Id_M) \circ (\Delta \otimes Id_M^{\otimes 2}))\right\}\\
&= \rho^{\otimes 2} \circ (Id \otimes \tau \otimes Id) \circ (\alpha_H^{\otimes 2} \otimes Id_M^{\otimes 2}) \circ \left\{\Delta \otimes (\rho^{\otimes 2} \circ (Id_H \otimes \tau \otimes Id_M) \circ (\Delta \otimes Id_M^{\otimes 2}))\right\} ~ \text{by \eqref{eq:Deltaalpha}}\\
&= \rho^{\otimes 2} \circ (Id_H \otimes \tau \otimes Id_M) \circ (\mu^{\otimes 2} \otimes Id_M^{\otimes 2}) \circ (Id_H \otimes \tau \otimes Id_H \otimes Id_M^{\otimes 2}) \circ (\Delta^{\otimes 2} \otimes \alpha_M^{\otimes 2}) \quad \text{by \eqref{eq:moduleaxiom}}\\
&= \rho^{\otimes 2} \circ (Id_H \otimes \tau \otimes Id_M) \circ (\Delta \otimes Id_M^{\otimes 2}) \circ (\mu \otimes \alpha_M^{\otimes 2}) \quad \text{by \eqref{eq:Deltamu}}\\
&= \rho^2 \circ (\mu \otimes \alpha_M^{\otimes 2}).
\end{split}
\]
We have shown that $\rho^2$ is the structure map of an $H$-module structure on $M^{\otimes 2}$.
\end{proof}

\subsection{Module Hom-algebra}
\label{subsec:mha}

Let $(H,\mu_H,\Delta_H,\alpha_H)$ be a Hom-bialgebra and $(A,\mu_A,\alpha_A)$ be a Hom-associative algebra.  An \textbf{$H$-module Hom-algebra} structure on $A$ consists of an $H$-module structure $\rho \colon H \otimes A \to A$ on $A$ such that
\begin{equation}
\label{eq:mha}
\rho \circ (\alpha_H^2 \otimes \mu_A) = \mu_A \circ \rho^2.
\end{equation}
We call \eqref{eq:mha} the \emph{module Hom-algebra axiom}.  Here $\rho^2 \colon H \otimes A^{\otimes 2} \to A^{\otimes 2}$ is the map \eqref{eq:rho2'} in Lemma ~\ref{lem:rho2}.

If we write $\rho(x \otimes a) = xa$ for $x \in H$ and $a \in A$, then \eqref{eq:mha} can be written as
\begin{equation}
\label{eq:mha'}
\alpha_H^2(x)(ab) = \sum_{(x)}\, (x'a)(x''b) 
\end{equation}
for $x \in H$ and $a, b \in A$.  If $\alpha_H^2 = Id_H$ (e.g., if $\alpha_H = Id_H)$, then \eqref{eq:mha'} reduces to the usual module algebra axiom
\begin{equation}
\label{eq:maaxiom}
x(ab) = \sum_{(x)}\, (x'a)(x''b).
\end{equation}
In particular, module algebras are examples of module Hom-algebras in which $\alpha = Id$ for both the bialgebra and the algebra involved.

We are now ready to finish the proof of Theorem ~\ref{thm:char}.

\begin{proof}[Proof of Theorem ~\ref{thm:char}]
The first two parts of the Theorem were proved in Lemma ~\ref{lem:rhotilde} and Lemma ~\ref{lem:rho2}.  By multiplicativity, $\mu_A \colon A^{\otimes 2} \to A$ is a morphism of Hom-modules.  Now equip $A$ and $A^{\otimes 2}$ with the $H$-module structures $\rhotilde$ \eqref{eq:rhot} and $\rho^2$ \eqref{eq:rho2'}, respectively.  Then $\mu_A$ is a morphism of $H$-modules if and only if
\[
\begin{split}
\mu_A \circ \rho^2 
&= \rhotilde \circ (Id_H \otimes \mu_A) \quad \text{by \eqref{eq:modmorphism}}\\
&= \rho \circ (\alpha_H^2 \otimes Id_A) \circ (Id_H \otimes \mu_A) \quad \text{by \eqref{eq:rhot}}\\
&= \rho \circ (\alpha_H^2 \otimes \mu_A).
\end{split}
\]
The above equality is exactly the module Hom-algebra axiom \eqref{eq:mha}, as desired.
\end{proof}

\section{Deforming module algebras into module Hom-algebras}
\label{sec:deform}

The purpose of this section is to prove Theorem ~\ref{thm:deform}, and hence also Corollary ~\ref{cor:deform}.  We will also provide some examples to illustrate Corollary ~\ref{cor:deform}.


\begin{proof}[Proof of Theorem ~\ref{thm:deform}]
The Hom-associative algebra $A_\alpha = (A,\mu_{\alpha,A} = \alpha_A \circ \mu_A,\alpha_A)$ was discussed in \S \ref{subsec:homass}.  As for the Hom-bialgebra $H_\alpha$, dualizing the argument for $A_\alpha$, one can check that $\Delta_{\alpha,H} = \Delta_H \circ \alpha_H$ is Hom-coassociative \eqref{eq:homcoass}. The conditions ~\eqref{eq:Deltaalpha} and ~\eqref{eq:Deltamu} follow from the assumptions that $\alpha_H$ is a bialgebra morphism and that $H$ is a bialgebra, respectively.

To show that $\rho_\alpha = \alpha_A \circ \rho$ gives the Hom-associative algebra $A_\alpha$ the structure of an $H_\alpha$-module Hom-algebra, we need to check that (i) $\rho_\alpha$ gives $(A,\alpha_A)$ the structure of an $H$-module and that (ii) the module Hom-algebra axiom \eqref{eq:mha} holds.  The proof that $\rho_\alpha$ is the structure map of an $H$-module structure on $A$ is similar to the proofs of Lemmas ~\ref{lem:rhotilde} and ~\ref{lem:rho2}, so we will leave it to the reader as an easy exercise.

It remains to prove the module Hom-algebra axiom \eqref{eq:mha} in this case, which  states that
\begin{equation}
\label{eq:mhadeform}
\rho_\alpha \circ (\alpha_H^2 \otimes \mu_{\alpha,A}) = \mu_{\alpha,A} \circ \rho_\alpha^2.
\end{equation}
Let us first decipher the map $\rho_\alpha^2$.  From \eqref{eq:rho2'} and $\rho_\alpha = \alpha_A \circ \rho$, we have
\begin{equation}
\label{eq:rhoalpha2}
\begin{split}
\rho_\alpha^2 
&= \rho_\alpha^{\otimes 2} \circ (Id_H \otimes \tau_{H,A} \otimes Id_A) \circ (\Delta_{\alpha,H} \otimes Id_A^{\otimes 2})\\
&= \alpha_A^{\otimes 2} \circ \rho^{\otimes 2} \circ (Id_H \otimes \tau_{H,A} \otimes Id_A) \circ ((\Delta_H \circ \alpha_H) \otimes Id_A^{\otimes 2})\\
&= \rho^{\otimes 2} \circ (Id_H \otimes \tau_{H,A} \otimes Id_A) \circ ((\Delta_H \circ \alpha_H^2) \otimes \alpha_A^{\otimes 2}) \quad \text{by \eqref{eq:alpharho} and \eqref{eq:Deltaalpha}}.
\end{split}
\end{equation}
Since $A$ is an $H$-module algebra, it follows from \eqref{eq:maaxiom} that
\begin{equation}
\label{eq:murhoalpha2}
\mu_A \circ \rho^{\otimes 2} \circ (Id_H \otimes \tau_{H,A} \otimes Id_A) \circ ((\Delta_H \circ \alpha_H^2) \otimes \alpha_A^{\otimes 2}) = \rho \circ (\alpha_H^2 \otimes (\mu_A \circ \alpha_A^{\otimes 2})).
\end{equation}
Therefore, we have
\[
\begin{split}
\rho_\alpha \circ (\alpha_H^2 \otimes \mu_{\alpha,A}) 
&= \alpha_A \circ \rho \circ (\alpha_H^2 \otimes (\mu_A \circ \alpha_A^{\otimes 2})) \quad \text{by multiplicativity of $\alpha_A$}\\
&= \alpha_A \circ \mu_A \circ \rho^{\otimes 2} \circ (Id_H \otimes \tau_{H,A} \otimes Id_A) \circ ((\Delta_H \circ \alpha_H^2) \otimes \alpha_A^{\otimes 2}) \quad \text{by \eqref{eq:murhoalpha2}}\\
&= \mu_{\alpha,A} \circ \rho_\alpha^2 \quad \text{by \eqref{eq:rhoalpha2}}.
\end{split}
\]
This proves \eqref{eq:mhadeform}, as desired.
\end{proof}

We now give some examples that illustrate Corollary ~\ref{cor:deform}, which is the special case of Theorem ~\ref{thm:deform} when $\alpha_H = Id_H$.

\begin{example}
\label{ex:aut}
Let $(A,\mu)$ be an associative algebra.  Denote by $G$ the group of algebra automorphisms of $A$ and by $\bk[G]$ its group bialgebra, in which $\Delta(\varphi) = \varphi \otimes \varphi$ for $\varphi \in G$.  It is easy to check that there is a $\bk[G]$-module algebra structure on $A$ whose structure map $\rho \colon \bk[G] \otimes A \to A$ is given by $\rho(\varphi \otimes a) = \varphi(a)$.

Suppose that $\alpha \colon A \to A$ is an algebra endomorphism such that $\alpha \circ \varphi = \varphi \circ \alpha$ for all $\varphi \in G$.  For example, if $A$ is unital and $a \in A$ is invertible such that $\varphi(a) = a$ for all $\varphi \in G$, then $i_a \circ \varphi = \varphi \circ i_a$, where $i_a(b) = aba^{-1}$.  Such a map $\alpha$ is clearly $\bk[G]$-linear.  Therefore, by Corollary ~\ref{cor:deform}, there is a $\bk[G]$-module Hom-algebra structure
\[
\rho_\alpha = \alpha \circ \rho \colon \bk[G] \otimes A \to A, \quad \rho_\alpha(\varphi \otimes a) = \alpha(\varphi(a))
\]
on the Hom-associative algebra $A_\alpha = (A, \mu_{\alpha,A} = \alpha \circ \mu, \alpha)$.\qed
\end{example}

\begin{example}
\label{ex:steenrod}
Fix a prime $p$, and let $X$ be a topological space.  The singular mod $p$ cohomology $A = \mathrm{H}^*(X; \bZ/p)$ of $X$ is an $\cata_p$-module algebra, where $\cata_p$ is the Steenrod algebra associated to the prime $p$ \cite{es,hatcher}.  Now let $f \colon X \to X$ be a continuous self-map of $X$.  Then the induced map $f^* \colon A \to A$ on mod $p$ cohomology is a map of $\bZ/p$-algebras that respects the Steenrod operations, i.e., $f^*$ is $\cata_p$-linear.  Therefore, by Corollary  ~\ref{cor:deform}, there is an $\cata_p$-module Hom-algebra structure
\[
\rho_f \colon \cata_p \otimes A \to A, \quad \rho_f(Sq^k \otimes x) = f^*(Sq^k(x))
\]
on $A$, where $x \in A$ and $Sq^k \in \cata_p$ is the $k$th Steenrod operation of degree $2k(p-1)$ (resp. $k$) if $p$ is odd (resp. if $p = 2$).

Likewise, we can consider the complex cobordism $\mathrm{MU}^*(X)$ of $X$, which is an $S$-module algebra, where $S$ is the Landweber-Novikov algebra \cite{landweber,novikov} of stable cobordism operations.  The same considerations as in the previous paragraph, now with $A = \mathrm{MU}^*(X)$ and $S$ instead of $\cata_p$, can be applied here.\qed
\end{example}

\section{Twisted $sl(2)$-action on the affine plane}
\label{sec:twisted}

The purposes of this section are to prove Theorem ~\ref{thm:U} and to use this Theorem to construct some $q$-deformations of the $sl(2)$-action on the affine plane.

\begin{proof}[Proof of Theorem ~\ref{thm:U}]
First note that we only need to prove the first part, since the second part follows from it and Theorem ~\ref{thm:deform}.

To prove the first part, observe that, if $\alpha_L$ can be extended to an algebra endomorphism $\alpha_U$ of the universal enveloping algebra $U(L)$, then $\alpha_U$ must be unique because $L$ generates $U(L)$ as an algebra.  Thus, it remains to show that $\alpha_L$ can be extended to a bialgebra endomorphism $\alpha_U$ of $U(L)$ that satisfies \eqref{eq:alphaza}.

We define $\alpha_U \colon U(L) \to U(L)$ by extending $\alpha_L$ linearly and multiplicatively with $\alpha_U(1) = 1$.  To see that this $\alpha_U$ is a well-defined algebra endomorphism, let $x$ and $y$ be elements in $L$.  Then we have
\[
\begin{split}
\alpha_U([x,y] - xy + yx) 
&= \alpha_L([x,y]) - \alpha_L(x)\alpha_L(y) + \alpha_L(y)\alpha_L(x)\\
&= [\alpha_L(x),\alpha_L(y)] - \alpha_L(x)\alpha_L(y) + \alpha_L(y)\alpha_L(x) = 0,
\end{split}
\]
showing that $\alpha_U$ is an algebra endomorphism of $U(L)$.  To check that $\alpha_U$ is a bialgebra endomorphism, we must show that $\Delta_U \circ \alpha_U = \alpha_U^{\otimes 2} \circ \Delta_U$.  Since both $\alpha_U$ and $\Delta_U$ are algebra endomorphisms of $U(L)$, it suffices to check this on $xy$ with $x,y \in L$.  We compute as follows:
\[
\begin{split}
\Delta_U(\alpha_U(xy)) 
&= \Delta_U(\alpha_L(x))\Delta_U(\alpha_L(y))\\
&= (\alpha_L(x) \otimes 1 + 1 \otimes \alpha_L(x))(\alpha_L(y) \otimes 1 + 1 \otimes \alpha_L(y))\\
&= \alpha_U^{\otimes 2}(xy \otimes 1 + 1 \otimes xy + x \otimes y + y \otimes x)\\
&= \alpha_U^{\otimes 2}(\Delta_U(xy)).
\end{split}
\]
This proves that $\alpha_U$ is a bialgebra endomorphism of $U(L)$.

Finally, to check \eqref{eq:alphaza}, it suffices to check it when $z = xy$ for $x, y \in L$.  We have
\[
\begin{split}
\alpha_A((xy)a) 
&= \alpha_A(x(ya)) = \alpha_L(x)\alpha_A(ya) = \alpha_L(x)(\alpha_L(y)\alpha_A(a))\\
&= (\alpha_L(x)\alpha_L(y))\alpha_A(a) = \alpha_U(xy)\alpha_A(a),
\end{split}
\]
as desired.
\end{proof}


In the following example, we use Theorem ~\ref{thm:U} to construct some $q$-deformations of the $sl(2)$-action on the affine plane.

\begin{example}
\label{ex:q}
Let us first recall the usual, and the most important, $sl(2)$-action on the affine plane $A = \bk[x,y]$, where $\bk$ is assumed to have characteristic $0$.  The reader may consult, e.g., \cite[Chapter V]{kassel}, for the details.  Denote by $sl(2)$ the Lie algebra (under the commutator bracket) of $2 \times 2$-matrices with entries in $\bk$ and trace $0$.  It has a standard basis consisting of the matrices
\[
X = \begin{pmatrix}0 & 1\\ 0 & 0\end{pmatrix},\quad
Y = \begin{pmatrix}0 & 0\\ 1 & 0\end{pmatrix},\quad
Z = \begin{pmatrix}1 & 0\\ 0 & -1\end{pmatrix},
\]
satisfying 
\[
[X,Y] = Z, \quad [X,Z] = -2X, \quad [Y,Z] = 2Y.  
\]
Let $H$ denote its universal enveloping bialgebra $U(sl(2))$.  Then there is an $H$-module algebra structure on $A = \bk[x,y]$ determined by
\begin{equation}
\label{eq:XP}
XP = x\frac{\partial P}{\partial y},\quad
YP = y\frac{\partial P}{\partial x},\quad
ZP = x\frac{\partial P}{\partial x} - y\frac{\partial P}{\partial y},
\end{equation}
where $P \in A$ and $\partial/\partial x$ and $\partial/\partial y$ are the formal partial derivatives.  What is special about this $H$-module algebra is that it captures all the finite dimensional simple $sl(2)$-modules.  Indeed, for $n \geq 0$, let $A_n$ denote the $(n+1)$-dimensional subspace of $A$ consisting of the homogeneous polynomials of degree $n$.  Then $A_n$ is a sub-$H$-module of $A$ that is isomorphic to the unique (up to isomorphism) simple $sl(2)$-module $V(n)$ of dimension $n + 1$.

We now deform the above $H$-module algebra structure on $A$ into a module Hom-algebra using Theorem ~\ref{thm:U}.  Fix a non-zero scalar $q \in \bk$.  Define an algebra endomorphism $\alpha_A \colon A \to A$ on the affine plane $A$ by setting
\[
\alpha_A(x) = q^2x \quad\text{and}\quad \alpha_A(y) = qy.
\]
Also define a linear map $\alpha_L \colon sl(2) \to sl(2)$ by setting
\[
\alpha_L(X) = qX,\quad
\alpha_L(Y) = q^{-1}Y,\quad
\alpha_L(Z) = Z.
\]
It is easy to check that $\alpha_L$ is a Lie algebra endomorphism on $sl(2)$.  To use Theorem ~\ref{thm:U}, it suffices to check that
\begin{equation}
\label{eq:alphaWP}
\alpha_A(WP) = \alpha_L(W)\alpha_A(P)
\end{equation}
for $P \in A = \bk[x,y]$ and $W \in \{X,Y,Z\} \subseteq sl(2)$.  Note that 
\[
\alpha_A(P) = P(q^2x,qy).
\]
For example, if $P$ is the monomial $x^iy^j$, then 
\[
\alpha_A(P) = (q^2x)^i(qy)^j = q^{2i+j}x^iy^j = q^{2i+j}P.
\]
We check \eqref{eq:alphaWP} for $W = X$; the other two cases ($W = Y$ and $W = Z$) are essentially identical.  Using \eqref{eq:XP}, we compute as follows:
\[
\begin{split}
\alpha_A(XP) 
&= q^2x \cdot \left(\frac{\partial P}{\partial y}\right)(q^2x,qy) 
= qx \cdot \left(\frac{\partial P}{\partial y}\right)(q^2x,qy) \cdot \frac{d(qy)}{dy}\\
&= qx \cdot \frac{\partial (P(q^2x,qy))}{\partial y} = qX(P(q^2x,qy)) 
= \alpha_L(X)\alpha_A(P).
\end{split}
\]

Thus, Theorem ~\ref{thm:U} applies here with $L = sl(2)$, $A = \bk[x,y]$, and $\rho$ being determined by \eqref{eq:XP}.  In other words, with the notations in Theorem ~\ref{thm:U}, there is an $H_\alpha$-module Hom-algebra structure $\rho_\alpha = \alpha_A \circ \rho \colon H \otimes A \to A$ on the Hom-associative algebra $A_\alpha$.  The structure map $\rho_\alpha$ is determined by
\[
\begin{split}
\rho_\alpha(X \otimes P) &= q^2x \cdot \left(\frac{\partial P}{\partial y}\right)(q^2x,qy), \quad
\rho_\alpha(Y \otimes P) = qy \cdot \left(\frac{\partial P}{\partial x}\right)(q^2x,qy),\\
\rho_\alpha(Z \otimes P) &= q^2x \cdot \left(\frac{\partial P}{\partial x}\right)(q^2x,qy) - qy \cdot \left(\frac{\partial P}{\partial y}\right)(q^2x,qy).
\end{split}
\]
Of course, if $q = 1 \in \bk$, then $\alpha_A = Id_A$, $\alpha_L = Id_L$, and $\rho_\alpha = \rho$ is the original structure map \eqref{eq:XP} of the $H$-module algebra $A$.
\qed
\end{example}



\end{document}